\documentclass[11pt, reqno]{amsart}

\usepackage{amsmath,amssymb,amsfonts,euscript,enumerate,amsthm}
\usepackage{color}

\usepackage{a4wide}
\usepackage{graphicx}
\usepackage{epstopdf}
\usepackage[colorlinks=true,citecolor=red,linkcolor=blue]{hyperref}
\usepackage[]{cleveref}
\usepackage{mathtools}
\usepackage{esint}
\usepackage{mathrsfs}
\usepackage[notref, notcite, final]{showkeys}

\theoremstyle{plain}
\newtheorem{theorem}{Theorem}[section]
\newtheorem{lemma}[theorem]{Lemma}

\newtheorem{proposition}[theorem]{Proposition}

\theoremstyle{definition}
\newtheorem{definition}[theorem]{Definition}

\theoremstyle{remark}
\newtheorem{remark}[theorem]{Remark}

\numberwithin{equation}{section}


\newcommand{\R}{{\mathbb R}}
\newcommand{\bS}{\mathbb{S}}

\newcommand{\al}{\alpha}

\newcommand{\de}{\delta}
\newcommand{\e}{\varepsilon}

\newcommand{\la}{\lambda}
\newcommand{\si}{\sigma}

\newcommand{\Si}{\Sigma}
\newcommand{\Om}{\Omega}

\newcommand{\Ga}{\Gamma}



\newcommand{\pa}{\partial}

\newcommand{\qu}{\quad}

\newcommand{\ra}{\rightarrow}


\newcommand{\D}{\nabla}
\newcommand{\De}{\Delta}




\newcommand{\fr}{\frac}

\newcommand{\inn}[2]{\left\langle {#1},{#2} \right\rangle}
\newcommand{\norm}[1]{\left\lVert#1\right\rVert}


\makeatletter
\@namedef{subjclassname@2020}{\textup{2020} Mathematics Subject Classification}
\makeatother


\title[An eigenvalue problem for prescribed curvature equations]{An eigenvalue problem for prescribed curvature equations}

\author{Taehun Lee}
\address{School of Mathematics, Korea Institute for Advanced Study, Seoul 02455, Korea}
\email{taehun@kias.re.kr}

\subjclass[2020]{53C42 (Primary) 35J60 35P30 (Secondary)}
\keywords{curvature equation, eigenvalue problem, uniqueness}


\begin{document}


\begin{abstract}	
We study an eigenvalue problem for prescribed $\sigma_k$-curvature equations of star-shaped, $k$-convex, closed hypersurfaces. We establish the existence of a unique eigenvalue and its associated hypersurface, which is also unique, provided that the given data is even. Moreover, we show that the hypersurface must be strictly convex. A crucial aspect of our proof involves deriving uniform estimates in $p$ for $L_p$-type prescribed curvature equations.
\end{abstract}

\maketitle

%
%
\section{Introduction}
Let $\Si$ be a smooth, closed hypersurface in $\R^{n+1}$. We use $\nu(X)$ and $\kappa(X)$ to denote the outward unit normal and principal curvatures of the hypersurface $\Si$ at point $X$, respectively. 
This paper concerns the following eigenvalue problem for prescribed curvature equations:
\begin{align}\label{eq:main-X}
\inn{X}{\nu(X)}^{k}\si_k(\kappa(X))=\la\psi(\nu(X)) \qu \text{for any }X\in \Si,
\end{align}
where $\si_k:\R^n \ra \R$ represents the elementary symmetric polynomial of degree $k$ and $\psi$ is a smooth, positive function on $\bS^n$. The curvature $\si_k(\kappa)$ includes the mean curvature, scalar curvature, and Gauss curvature for cases when $k=1,2$, and $n$, respectively. 

Equation \eqref{eq:main-X} is connected to singularity models for an anisotropic flow governed by powers of curvature $\si_k$, see \cite{BCD17_Acta,DH22_CVPDE,GLM18_NoDEA,Huisken90_JDG,McCoy11_ASNSPCS}. Indeed, for $p\not=1$, if $X$ denotes a parametrization of the evolving hypersurfaces $\Si_t$ satisfying
\begin{align*}
\fr{\pa }{\pa t}X= -\operatorname{sign}(1-p)(f(\nu)\si_k(\kappa))^{\fr{1}{1-p}}\nu,
\end{align*}
then its self-similar solutions fulfill
\begin{align}\label{eq:Lp}
\inn{X}{\nu(X)}^{p-1} \si_k(\kappa(X))  =   \la /f(\nu(X))
\end{align}
for some constant $\lambda>0$. 
Therefore, \eqref{eq:main-X} corresponds to \eqref{eq:Lp} with $p=k+1$ and the anisotropic function $f=1/\psi$. In particular, for the case of $k=1$ and $p=2$, equation \eqref{eq:Lp} represents a self-similar solution of the (anisotropic) inverse mean curvature flow \cite{CD21_Duke,DH22_CVPDE}. Note that \eqref{eq:Lp} remains invariant under dilation of the hypersurface if and only if $p=k+1$, and the constant $\la$ in \eqref{eq:Lp} can be absorbed by scaling when $p\not=k+1$.

In the case of $k=n$, equation \eqref{eq:Lp} becomes the celebrated $L_p$ Minkowski problem initiated by Lutwak \cite{Lutwak93_JDG}, where the admissible solutions are convex. This is a central problem in convex geometry, and there is a vast literature on the $L_p$ Minkowski problem; see for instance \cite{BLYZ13_JAMS,CW06_AM,  GLW22_arxiv, LO95_JDG, Schneider14_book} and the references therein. In terms of the support function $u:\bS^n\ra \R$ defined by $u(x)=\inn{x}{\nu^{-1}(x)}$ for a strictly convex hypersurface $\Si$, equation \eqref{eq:Lp} can be transformed into 
\begin{align}\label{eq:Lp-u}
\det(\D^2 u + u I) = \la^{-1}u^{p-1}f \qu \text{on } \bS^n,
\end{align}
where $\D$ denotes the covariant derivative with respect to an orthonormal frame on $\bS^n$ and $I$ is the identity matrix. The existence results for \eqref{eq:Lp-u} can be found in \cite{CW06_AM} for $p>-n-1$ and \cite{GLW22_arxiv} for $p<-n-1$. In particular, Chou and Wang \cite{CW06_AM} completely resolved the eigenvalue problem in this case ($p=n+1$ in \eqref{eq:Lp-u} or $k=n$ in \eqref{eq:main-X}) by obtaining a unique pair $(u,\la)$, where $u$ is the support function of a strictly convex, closed hypersurface that is positive on $\bS^n$, and $\la>0$ is the eigenvalue.

On the other hand, when $p=1$, equation \eqref{eq:Lp} reduces to the well-known prescribed curvature equation. In \cite{GG02_Annals}, Guan and Guan demonstrated the existence of a strictly convex, closed hypersurface satisfying \eqref{eq:Lp}, under the condition that $f$ is even, that is, $f(x)=f(-x)$ for all $x\in \bS^n$.

We note that an equation analogous to \eqref{eq:Lp} arises in the context of curvature measures, which can be expressed as follows:
\begin{align}\label{eq:curv-measure}
\si_k(\kappa(X))=\fr{\inn{X}{\nu(X)}}{|X|^{n+1}}f\left(\fr{X}{|X|}\right).
\end{align}
Given certain conditions on $f$, Guan, Lin, and Ma \cite{GLM09_IMRN} demonstrated the existence of convex solutions to this equation. Moreover, Guan, Li, and Li \cite{GLL12_Duke} proved the existence of a unique admissible solution (which is $k$-convex and star-shaped, see \Cref{def:admissible} below) for a general $f$. The $L^p$ dual generalization of this problem, in the measure form of \eqref{eq:curv-measure}, has been further investigated in \cite{BHP18_JDG,LYZ18_AM}.\\

In this paper we are interested in the eigenvalue problem \eqref{eq:main-X} for all $1\le k< n$, assuming that $\psi$ is an even function, that is, $\psi(x)=\psi(-x)$ for all $x\in \bS^n$.
To describe an admissible hypersurface $\Si$ of the eigenvalue problem \eqref{eq:main-X}, it is necessary to define the notions of a $k$-convex and a star-shaped hypersurface.

\begin{definition}\label{def:admissible}
Let $\Si$ be a $C^2$ regular hypersurface in $\mathbb{R}^{n+1}$.
\begin{enumerate}[(i)]
\item A hypersurface $\Si$ is said to be \textit{$k$-convex} if its principal curvatures $\kappa(X)=(\kappa_1(X),\ldots,\kappa_n(X))$ lie in $\Gamma_k$ for all $X \in \Si$. Here, $\Ga_k$ represents the G$\mathring{a}$rding cone, defined as
\begin{align*}
\Gamma_k=\left\{\lambda \in \mathbb{R}^n \mid \sigma_i(\lambda)>0, i=1, \ldots, k\right\}.
\end{align*}
\item A hypersurface $\Si$ is called \textit{star-shaped} if $\inn{X}{\nu(X)}>0$ for all $X\in \Si$.
\end{enumerate}
\end{definition}

We say that a hypersurface $\Si$ is \textit{admissible} if $\Si$ is a $k$-convex, star-shaped hypersurface. The number $\lambda$ is referred to as the eigenvalue of \eqref{eq:main-X} if there exists an admissible hypersurface $\Si$ that satisfies \eqref{eq:main-X} for the given $\lambda$. We note that \eqref{eq:main-X} is ensured to be elliptic at all admissible hypersurfaces. 

We now state the main result of this paper. 

\begin{theorem}\label{thm:main}
Let $1\le k< n$, and let $\psi$ be a smooth, positive, even function on $\bS^n$. Then, there exists a unique eigenvalue $\la>0$ for \eqref{eq:main-X}. Moreover, the corresponding admissible hypersurface $\Si$ is strictly convex and unique up to dilation.
\end{theorem}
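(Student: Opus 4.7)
The plan is to reach the scale-invariant case $p=k+1$ of \eqref{eq:Lp} from nearby exponents by a limiting procedure, exactly as the abstract suggests. Setting $f=1/\psi$ in \eqref{eq:Lp} with $p$ in a small punctured neighborhood of $k+1$, the constant $\la$ may be absorbed by dilation, so the problem reduces to a prescribed $\si_k$-curvature equation. I would construct an admissible, origin-symmetric solution $\Si_p$ for each such $p$ by the continuity method in the symmetric class, extending the $p=1$ treatment of Guan--Guan; the evenness of $\psi$ is used here both to launch the continuity path and to eliminate translation-type nonuniqueness. Parametrizing $\Si_p$ by its radial function $\rho_p:\bS^n\to\R_+$ recasts \eqref{eq:Lp} as a fully nonlinear equation on $\bS^n$, uniformly elliptic on admissible data.

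The technical heart is a uniform $C^2$ estimate for $\rho_p$ as $p\to k+1$. A normalization such as $\max_{\bS^n}\rho_p=1$ eliminates the dilation symmetry recovered in the limit. One then needs $\min_{\bS^n}\rho_p\ge c>0$ independent of $p$: central symmetry prevents the origin from drifting to the boundary, and comparison with inscribed and circumscribed spheres together with integral identities obtained by testing \eqref{eq:Lp} against the position vector $X$ yields the two-sided $C^0$ bound. The $C^1$ bound follows from strict star-shapedness and the $C^0$ bound. The $C^2$ bound amounts to an upper estimate on the largest principal curvature $\kappa_{\max}$, which I would obtain from a maximum-principle argument on an auxiliary quantity of the form $\log\kappa_{\max}+h(\inn{X}{\nu(X)})$, exploiting the concavity of $\si_k^{1/k}$ on $\Ga_k$ and the positivity of $\inn{X}{\nu(X)}$. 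I expect this $C^2$ step to be the main obstacle, since the exponent $p-1$ appears as a coefficient in the commuted terms and must be kept uniformly bounded as $p\to k+1$. With these estimates in hand, elliptic theory and a diagonal sequence produce a smooth limit $(\Si,\la)$ solving \eqref{eq:main-X}.

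For strict convexity, I would apply a constant-rank theorem for $\si_k$-type equations in the spirit of Bian--Guan to the Weingarten map of the limit $\Si$: its minimal principal curvature is either identically zero or strictly positive on $\Si$, and the former is excluded by closedness and central symmetry, since a flat direction at a point would, by reflection, appear at the antipode, contradicting $\si_k(\kappa)>0$. For uniqueness, suppose $(\la_1,\Si_1)$ and $(\la_2,\Si_2)$ are two admissible eigenpairs with support functions $u_1,u_2$, and dilate $\Si_2$ so that $u_2\le u_1$ on $\bS^n$ with equality at some $x_0$. Rewriting \eqref{eq:main-X} as a fully nonlinear equation
\[
F(\D^2 u+uI)=\la^{-1}u^{k}\psi \qu \text{on }\bS^n,
\]
the comparison at $x_0$ forces $\la_1\le\la_2$; exchanging the roles of $u_1$ and $u_2$ yields $\la_1=\la_2$, and the strong maximum principle then identifies $u_1$ with $u_2$, hence $\Si_1$ with $\Si_2$, up to dilation.
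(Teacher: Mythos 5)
Your overall architecture matches the paper's: approach $p=k+1$ from nearby exponents, get uniform estimates, pass to the limit, and obtain uniqueness of the eigenvalue by a touching-point comparison. There are, however, several substantive gaps where the mechanism you propose would not obviously close.

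\textbf{Strict convexity.} You propose a constant-rank theorem in the spirit of Bian--Guan. This is the wrong tool here: first, the operator in support-function form is the Hessian quotient $\det(\D^2u+uI)/\si_{n-k}(\D^2u+uI)$, not $\si_k(\D^2u+uI)$, and the structure conditions of the constant-rank machinery are not known to hold for this quotient with right-hand side $u^k/(\la\psi)$; second, it is unnecessary. The paper's decisive lemma (\Cref{lem:1/k}) is a maximum-principle bound of the form
\[
\sum_{i=1}^n\frac1{\kappa_i}\le C\,\norm{u}_{L^\infty}^{(p-1)/k}\norm{f}_{C^{1,1}},
\]
obtained by applying $F^{ij}\nabla_i\nabla_j$ to $W=\De u+nu=\operatorname{tr}(h)$ via Simons' identity \eqref{eq:simons} on the support function. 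Since the approximating solutions (from Guan--Ren--Wang) are already strictly convex, each $1/\kappa_i>0$, so this bound is a \emph{uniform positive lower bound} on each $\kappa_i$, which survives the limit $p\downarrow k+1$ and directly yields strict convexity of $\Si_0$; an upper bound on $\max_i\kappa_i$ then follows from the equation and this lower bound. Your plan to bound $\log\kappa_{\max}+h(\inn{X}{\nu})$ by a max-principle argument in the radial parametrization targets the opposite quantity and, without assuming convexity up front, runs directly into the $C^2$ difficulty for $k<n$ that Guan--Ren--Wang's theorem was designed to resolve.

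\textbf{Existence of the approximants and one-sidedness of the limit.} You leave the side of $k+1$ unspecified. The proof must use $p>k+1$: the Guan--Ren--Wang barrier condition holds precisely because $1-p<-k$, and the uniqueness argument (on $\log(\rho_1/\rho_2)$ at its extrema) produces a contradiction only when $p\neq k+1$, giving $\rho_1^{k+1-p}\ge\rho_2^{k+1-p}$ and hence $\rho_1\le\rho_2$ only if $p>k+1$. Building the approximants by ``extending Guan--Guan via a continuity method in the symmetric class'' is both vaguer and unnecessary compared with citing the GRW existence theorem directly; the evenness of $\psi$ is used only to ensure the approximating bodies are origin-symmetric (via uniqueness), which is what guarantees $\min u_0>0$ after volume normalization.

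\textbf{Uniqueness of the hypersurface.} After $\la_1=\la_2$, you invoke the strong maximum principle directly on $u_1-u_2$. The linearized zeroth-order coefficient $\sum_iF^{ii}-(\la\psi)^{-1/k}$ has no definite sign, and moreover the scale-invariance at $p=k+1$ puts $u$ itself in the kernel of the linearized operator, so a naive Hopf argument on the sphere does not close. The paper instead dilates and rotates so that pieces of $\Si_1,\Si_2$ are local graphs $U_1>U_2$ over a domain $\Om$ with $U_1=U_2$ on $\pa\Om$ and compares in the Euclidean maximum principle of Caffarelli--Nirenberg--Spruck; this is the argument you would need.

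In short: the limiting strategy and the eigenvalue comparison are right, but the decisive technical ingredient --- the uniform estimate on $\sum 1/\kappa_i$ through the support function and Simons' identity --- is missing, the constant-rank route does not apply, and the final uniqueness step needs the local-graph comparison rather than a direct strong maximum principle on $u_1-u_2$.
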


\begin{remark}
\begin{enumerate}[(i)]
\item From the uniqueness, it can be deduced that the hypersurface $\Si$ is origin-symmetric. 
\item The evenness of $\psi$ will be used in establishing the existence of an eigenvalue and hypersurface.
The uniqueness of the eigenvalue and hypersurface does not require the evenness of $\psi$. See the proof of \Cref{thm:main} in \Cref{sec:pf}.
\item As previously noted, the case when $k=n$ was resolved for general $f$ in \cite{CW06_AM}.
\end{enumerate}
\end{remark}

We emphasize that, unlike the Laplace operator which possesses an infinite number of eigenvalues, our problem \eqref{eq:main-X} has only one eigenvalue. Furthermore, any origin-symmetric admissible solution to \eqref{eq:main-X} must be strictly convex. 

\bigskip

The proof of the main theorem relies on a compactness argument by deriving uniform estimates in $p$ for \eqref{eq:Lp} with $k+1<p<k+1+\e$. Additionally, in the same range of $p$, the existence and uniqueness of \eqref{eq:Lp} are required, where the existence follows from the work of Guan, Ren, and Wang \cite{GRW15_CPAM} (see \Cref{prop:exist} below). We will prove the uniqueness in \Cref{prop:unique}. Combining existence and uniqueness, we summarize the result as follows.
Here, $f$ is not necessarily even. An admissible solution to \eqref{eq:Lp} is defined in the same manner, imposing conditions that the hypersurface is $k$-convex and star-shaped. 

\begin{theorem}\label{thm:2}
Suppose $p>k+1$ with $1\le k\le n$, and let $f$ be a smooth, positive function on $\bS^n$. Then, there exists a unique admissible solution to \eqref{eq:Lp} for any $\la>0$. Moreover, the solution must be strictly convex.
\end{theorem}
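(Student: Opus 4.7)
The plan is to prove \Cref{thm:2} in three steps: existence, uniqueness, and strict convexity. Existence is immediate from \Cref{prop:exist}, which reproduces the prescribed $\si_k$-curvature result of Guan--Ren--Wang \cite{GRW15_CPAM} and delivers an admissible solution to \eqref{eq:Lp} for every $\la>0$ and every smooth positive $f$; no further work is needed here.

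For uniqueness the decisive point is that \eqref{eq:Lp} is \emph{not} scale invariant once $p\ne k+1$: if $\Si$ is admissible with eigenvalue $\la$, then the homothety $t\Si$ is admissible with eigenvalue $t^{p-1-k}\la$, and since $p-1-k>0$ this dependence on $t$ is strictly monotone. Given two admissible solutions $\Si_1,\Si_2$ for the same $\la$, with radial functions $\rho_1,\rho_2\colon\bS^n\to\R_+$, I would set $t^\ast:=\max_{\bS^n}(\rho_2/\rho_1)$, attained at some $x_0\in\bS^n$. Then $t^\ast\Si_1$ encloses $\Si_2$ and touches it at $X_0:=\rho_2(x_0)x_0$ with common outward normal $\nu_0$. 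The nesting forces $W_{\Si_2}(X_0)\ge W_{t^\ast\Si_1}(X_0)$ as symmetric matrices, and since both curvature vectors lie in $\Ga_k$, monotonicity of $\si_k$ on its G$\mathring{a}$rding cone gives $\si_k(\kappa(\Si_2))(X_0)\ge \si_k(\kappa(t^\ast\Si_1))(X_0)$. Substituting into \eqref{eq:Lp} at $X_0$, where both surfaces share the common values of $\inn{X_0}{\nu_0}$ and $f(\nu_0)$, one obtains $\la\ge(t^\ast)^{p-1-k}\la$, hence $t^\ast\le 1$ and $\rho_2\le\rho_1$ on $\bS^n$; the symmetric argument applied to $\max(\rho_1/\rho_2)$ yields the reverse inequality, so $\rho_1\equiv\rho_2$.

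For strict convexity I would run a continuity argument. When $f$ is a positive constant $c$, the ball about the origin of radius $\bigl(\la/(c\binom{n}{k})\bigr)^{1/(p-1-k)}$ is easily checked to satisfy \eqref{eq:Lp}, hence by uniqueness it is \emph{the} admissible solution, and it is strictly convex. Taking $f_s=(1-s)c+sf$ for $s\in[0,1]$ and denoting by $\Si_s$ the corresponding unique admissible solution, the map $s\mapsto\Si_s$ is continuous by uniqueness and standard elliptic regularity; strict convexity being an open condition, the set of $s\in[0,1]$ where $\Si_s$ is strictly convex is open and contains $0$, while closedness would be handled by a constant rank theorem of Caffarelli--Guan--Ma type for prescribed $\si_k$-equations with right-hand side depending on both $X$ and $\nu$. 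The main obstacle is precisely this last step: one must verify that the right-hand side $\la/(f(\nu)\inn{X}{\nu}^{p-1})$ satisfies the structural convexity hypothesis of the constant rank theorem, and the hypothesis $p>k+1$ should enter in an essential way here, since it governs the sign of the scaling weight in the $\inn{X}{\nu}$-factor.
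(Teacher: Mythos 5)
The existence and uniqueness parts of your proposal are sound, but you have overcomplicated (and left incomplete) the strict convexity part by missing the simplest route.

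\textbf{Existence and uniqueness.} Your use of \Cref{prop:exist} for existence and your comparison argument for uniqueness both work. The uniqueness argument is essentially the same as the paper's, only phrased geometrically: the paper studies the extremum of $\log(\rho_1/\rho_2)$ and compares $g_{ij}[\rho]/\rho^2$, $h_{ij}[\rho]/\rho$ directly via \eqref{eq:g}--\eqref{eq:h}, whereas you dilate $\Si_1$ to enclose and touch $\Si_2$ and invoke the second-fundamental-form comparison at an interior tangency. Both exploit that the eigenvalue of the dilation $t\Si$ scales by $t^{p-1-k}$ with $p-1-k>0$, and both rely on the monotonicity of $\si_k$ on the convex set of symmetric matrices with eigenvalues in $\Ga_k$. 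Your version is fine, though you should spell out that $t^\ast\Si_1$ and $\Si_2$ share the outward unit normal at the tangency and that the matrix comparison is legitimate because both Weingarten maps have eigenvalues in $\Ga_k$.

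\textbf{Strict convexity.} Here is the gap. You did not need a continuity method or a constant rank theorem, and the paper does not use one: \Cref{prop:exist} (via Guan--Ren--Wang) already produces a \emph{strictly convex} solution, not merely an admissible one. Combined with \Cref{prop:unique}, which says all admissible solutions coincide, it follows immediately that every admissible solution is strictly convex. Your proposal only records that \Cref{prop:exist} ``delivers an admissible solution,'' underselling the proposition, and then launches into a deformation $f_s=(1-s)c+sf$ together with a Caffarelli--Guan--Ma type constant rank theorem. That route is not only unnecessary but, as you concede yourself, incomplete: you would have to verify the structural convexity hypothesis on the right-hand side $\la/(f(\nu)\inn{X}{\nu}^{p-1})$, and you have no argument that this holds for general smooth positive $f$. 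As written, the strict convexity claim is not established by your proposal; reading the statement of \Cref{prop:exist} carefully closes it in one line.
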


We note that the quantity $\si_k(\kappa)$ corresponds to an operator in Hessian quotient equation when expressed as a support function. Indeed, if $\Si$ is strictly convex, then in terms of the support function $u$ of $\Si$, \eqref{eq:Lp} becomes
\begin{align}\label{eq:Lp-u-intro}
\fr{\det(\D^2 u+uI)}{\si_{n-k}(\D^2u+uI)} = \fr{fu^{p-1}}{\lambda} \qu \text{on }\bS^n,
\end{align}
as described in \eqref{eq:main-u} below. When $k=n$, we can see that this equation reduces to the $L_p$ Minkowski problem \eqref{eq:Lp-u}, as mentioned earlier. 

Another important equation that includes \eqref{eq:Lp-u} is the equation for the $L_p$ Christoffel--Minkowski problem, which is formulated as
\begin{align}\label{eq:CM}
\si_k(\D^2 u + uI) = \fr{fu^{p-1}}{\la}\qu \text{on } \bS^n.
\end{align}
See Bryan--Ivaki--Scheuer \cite{BIS23_TAMS}, Guan--Lin--Ma \cite{GLM06_CAMSB}, Guan--Ma \cite{GM03_Invent}, and Guan--Ma--Zhou \cite{GMZ06_CPAM} for the case $p=1$; Guan--Xia \cite{GX18_CVPDE} for $1<p<k+1$; Hu--Ma--Shen \cite{HMS04_CVPDE} for $p\ge k+1$.

When $p=k+1$, a key distinction between equations \eqref{eq:Lp-u-intro} and \eqref{eq:CM} is that \eqref{eq:Lp-u-intro} permits cone singularities with the origin as the vertex of the cone for some non-symmetric $f$ (see \cite{DH22_CVPDE} for instance), whereas \eqref{eq:CM} guarantees that the origin is always located inside $\Sigma$, leading to a smooth solution \cite{HMS04_CVPDE}.

\bigskip

The fully non-linear equations in the Hessian quotient class have been investigated over a domain in $\R^n$ by Caffarelli, Nirenberg, and Spruck \cite{CNS85_Acta}, Krylov \cite{Krylov95_TAMS}, Trudinger \cite{Trudinger90_ARMA,Trudinger95_Acta}, and Bao, Chen, Guan, and Ji \cite{BCGJ03_AJM}. See also \cite{HS99_Acta} for a crucial application of the Hessian quotient in the convexity estimate of the mean convex mean curvature flow. Recently, Chen and Xu \cite{CX22_AM} addressed the quotient of mixed Hessians of the form
\begin{align*}
\fr{\si_k(\bar\mu-\mu_1,\ldots,\bar\mu-\mu_n)}{\si_{l}(\bar\mu-\mu_1,\ldots,\bar\mu-\mu_n)} = \fr{fu^{p-1}}{\lambda} \qu \text{on }\bS^n,
\end{align*}
where $\mu_1$,$\ldots$,$\mu_n$ are the eigenvalues of $\D^2u+uI$, $\bar\mu=\sum_{i=1}^n\mu_i$, 
$0\le l <k\le n$, and $p\ge k-l+1$.

Finally, we conclude the introduction by mentioning the eigenvalue problems associated with the Monge--Amp\`ere equation over a convex domain in $\R^n$, which can be found in \cite{Lions85_AMPA,Salani05_AM,Tso90_Invent}. These problems may be viewed as the Euclidean counterpart of the eigenvalue problem \eqref{eq:main-X} with $k=n$. 

\bigskip

The paper is organized as follows. In Section \ref{sec:pre} we collect relevant information, such as properties of $\si_k$, radial functions for star-shaped hypersurfaces, and support functions in convex geometry. In addition, the proof of \Cref{thm:2} is presented. In Section \ref{sec:vol-diam}, we derive estimates for volume and diameter when $p>k+1$. Section \ref{sec:curv} dedicated to establishing curvature estimates. Finally, the proof of \Cref{thm:main} is provided in Section \ref{sec:pf}.

\section{Preliminaries}\label{sec:pre}

We refer to \cite{CNS85_Acta,CNS86_incollection,CNS88_CPAM} for general background on $\si_k(\kappa)$. Recall that 
\begin{align*}
\si_k(\kappa) = \sum_{i_1<i_2<\cdots<i_k}\kappa_{i_1}\cdots\kappa_{i_k}.
\end{align*}
With an abuse of notation, we often denote by $\si_k(A)$ the $k^{\text{th}}$ elementary symmetric polynomial with respect to the eigenvalues of a symmetric two-tensor $A$. 

The elementary symmetric sum $\sigma_k$ satisfies the following property, known as the Maclaurin inequality:
\begin{align}\label{eq:Maclaurin}
c_1\si_1 \ge c_2\si_2^{1/2}\ge \cdots \ge c_n\si_n^{1/n}
\end{align}
for some dimensional constant $c_i$, $i=1,\ldots,n$.
It is also well-known that 
\begin{align}\label{eq:dsi_k}
\fr{\pa\si_k}{\pa \kappa_i}(\kappa)>0 \qu \text{for all } \kappa\in \Ga_k \text{ and }i=1,\ldots,n
\end{align}
and
\begin{align}\label{eq:si(ka)}
\si_k\left(\kappa_1, \ldots, \kappa_n\right)=\fr{\si_{n-k}(\kappa_1^{-1},\ldots,\kappa_n^{-1})}{\si_n(\kappa_1^{-1},\ldots,\kappa_n^{-1})}.
\end{align}\\

We recall some basic properties of star-shaped hypersurfaces in $\R^{n+1}$; for details, see \cite{Oliker84_CPDE}. For a star-shaped hypersurface, the position vector $X$ can be expressed as $X=\rho(x) x$ for $x\in \bS^n$ using the radial function $\rho:\bS^n\ra \R$. Then, the induced metric $g_{ij}$ is given as
\begin{align}\label{eq:g}
g_{ij}[\rho] = \rho^2 \de_{ij}+\D_i\rho\D_j\rho.
\end{align}
The unit outward normal is given by
\begin{align}\label{eq:nu}
\nu=\frac{\rho x-\nabla \rho}{\sqrt{\rho^2+|\nabla \rho|^2}},
\end{align}
and the relation between the radial function $\rho$ and the support function $u=\inn{X}{\nu}$ is
\begin{align}\label{eq:u-rho}
u=\fr{\rho^2}{\sqrt{\rho^2+|\nabla \rho|^2}}.
\end{align}
The second fundamental form can be computed as
\begin{align}\label{eq:h}
h_{i j}[\rho]=\left(\rho^2+|\nabla \rho|^2\right)^{-1 / 2}\left(\rho^2 \delta_{i j}+2 \D_i\rho\D_j\rho-\rho \D_i\D_j\rho\right).
\end{align}\\

For a moment, let us consider $\Si$ as a strictly convex, closed hypersurface in $\R^{n+1}$. Let $\nu(x)$ be the unit outer normal vector to $\Sigma$ at $x \in \Sigma$. The Gauss map $\nu$ is then a diffeomorphism from $\Sigma$ onto $\mathbb{S}^n$. The support function $u:\bS^n \ra \R$ of $\Sigma$ is defined as
\begin{align*}
u(x)=\sup\{ \inn{x}{y}: y\in \Sigma\}  = \inn{x}{ \nu^{-1}(x)}, \quad x \in \mathbb{S}^n \subset \mathbb{R}^{n+1}.
\end{align*}

One can verify that the second fundamental form $h$ is given by the expression
\begin{align*}
h_{ij}[u]= \D_i\D_j u+u\de_{ij}.
\end{align*}
Using this and the Ricci identity, we have
\begin{align*}
\D_lh_{ij}&=\D_l\D_i\D_ju+(\D_lu)\de_{ij}=\D_i\D_l\D_ju+\overline R_{lijm}\D_mu+(\D_lu)\de_{ij}
\\
&=\D_i\D_l\D_ju+(\de_{lj}\de_{im}-\de_{lm}\de_{ij})\D_mu+(\D_lu)\de_{ij}
\\
&=\D_i\D_l\D_ju+(\D_iu)\de_{lj}=\D_ih_{lj}
\end{align*}
and
\begin{align*}
\D_{k}\D_lh_{ij}&=\D_k\D_ih_{lj}=\D_i\D_kh_{lj}+\overline{R}_{kilm}h_{mj}+\overline{R}_{kijm}h_{lm}
\\
&=\D_i\D_jh_{kl}+(\de_{kl}\de_{im}-\de_{km}\de_{il})h_{mj}+(\de_{kj}\de_{im}-\de_{km}\de_{ij})h_{lm}
\\
&=\D_i\D_jh_{kl}+\de_{kl}h_{ij}-h_{kj}\de_{il}+\de_{kj}h_{il}-h_{kl}\de_{ij},
\end{align*}
where $\overline R_{ijkl}$ is the Riemannian curvature tensor on the sphere $\bS^n$.
Hence, it follows that
\begin{align}\label{eq:simons}
\De h_{ij} = \D_i\D_j \operatorname{tr}(h)+nh_{ij}-\operatorname{tr}(h)\de_{ij}.
\end{align}

The eigenvalues of $h$ correspond to the reciprocals of the principal curvatures $\kappa_i=\kappa_i[u]$ so that
\begin{align*}
\si_k(\D^2u+uI)= \si_k(\kappa_1[u]^{-1},\ldots,\kappa_n[u]^{-1}).
\end{align*}
By using \eqref{eq:si(ka)}, equation \eqref{eq:Lp} can be reformulated as 
\begin{align}\label{eq:main-u}
\fr{\det(\D^2 u+uI)}{\si_{n-k}(\D^2u+uI)} = \fr{fu^{p-1}}{\lambda} \qu \text{on }\bS^n.
\end{align}
In light of equation \eqref{eq:Lp-u}, equation \eqref{eq:main-u} migh be referred to as an $L_p$-type curvature equation.
Since $\si_k^{1/k}$ is concave operator, it is natural to define $F(A)=(\si_n(A)/\si_{n-k}(A))^{1/k}$. We then rewrite \eqref{eq:main-u} as
\begin{align*}
F(\D^2u+uI)= (f\la^{-1})^{1/k}u^{\fr{p-1}{k}} \qu \text{on }\bS^n.
\end{align*}\\

We conclude this section by establishing the existence and uniqueness results for \eqref{eq:Lp} when $k>p+1$. The following two propositions prove \Cref{thm:2}. As mentioned in the introduction, the existence result is a consequence of Theorem 1.5 in \cite{GRW15_CPAM}.

\begin{proposition}\label{prop:exist}
Let $p>k+1$, and let $f$ be a smooth positive function on $\bS^n$. Then there exists a strictly convex hypersurface $\Si$ satisfying \eqref{eq:Lp}.
\end{proposition}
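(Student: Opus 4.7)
The plan is to cast \eqref{eq:Lp} as a prescribed $\si_k$-curvature equation, invoke Theorem 1.5 of Guan--Ren--Wang \cite{GRW15_CPAM} to produce a smooth, star-shaped, $k$-convex closed hypersurface, and then upgrade this admissible hypersurface to one that is strictly convex.

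Rewriting \eqref{eq:Lp} as $\si_k(\kappa(X)) = \psi(X, \nu(X))$ with
\begin{equation*}
\psi(X, \nu) := \frac{\la}{f(\nu)\inn{X}{\nu}^{p-1}},
\end{equation*}
I would verify the hypotheses of \cite{GRW15_CPAM}. Sub- and super-solution barriers come from concentric round spheres: on $S_R$ centered at the origin one has $\si_k(\kappa) = \binom{n}{k}R^{-k}$ and $\inn{X}{\nu}=R$, so \eqref{eq:Lp} reduces to $\binom{n}{k}R^{p-1-k} = \la/f$. Because $p-1-k>0$, the left-hand side is strictly increasing from $0$ to $\infty$ in $R$, and since $f$ is smooth and positive on $\bS^n$ one can pick $R_1<R_2$ so that $S_{R_1}$ is a strict subsolution and $S_{R_2}$ a strict supersolution. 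The structural monotonicity hypothesis of \cite{GRW15_CPAM} in the radial variable---required for $C^0$ and $C^1$ control along the continuity method---holds because $\inn{X}{\nu}^{-(p-1)}\sim \rho^{-(p-1)}$ decays in the correct direction under $p>k+1$. Their theorem then produces a smooth admissible solution $\Si$.

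The principal obstacle is to pass from $k$-convex to strictly convex, since for $1\le k<n$ admissibility is strictly weaker than convexity. The plan is a deformation-plus-constant-rank argument: along a continuous family of right-hand sides $f_t$ connecting a constant function $f_0$ (whose admissible solution is a round sphere, hence strictly convex) to $f_1=f$, the set of $t$ for which the admissible solution is strictly convex is open by the implicit function theorem applied to the support-function version of \eqref{eq:Lp}, namely the Hessian quotient equation
\begin{equation*}
\left(\frac{\si_n(W)}{\si_{n-k}(W)}\right)^{1/k} = (f_t/\la)^{1/k}u^{(p-1)/k} \qu \text{on }\bS^n, \qu W=\D^2 u+uI,
\end{equation*}
and closed by a constant rank theorem of Caffarelli--Guan--Ma/Bian--Guan type applied to the concave operator $F=(\si_n/\si_{n-k})^{1/k}$ with right-hand side $c(x)u^{(p-1)/k}$. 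The sign $p-1>0$ is precisely what supplies the algebraic inequality required for rank propagation, and together with uniform $C^2$ bounds along the path---which, for $p>k+1$, are of the type to be developed in Sections \ref{sec:vol-diam}--\ref{sec:curv}---one concludes strict convexity at $t=1$.
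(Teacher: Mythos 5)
Your first step—invoking Guan--Ren--Wang's Theorem~1.5 and verifying a barrier condition via round spheres—matches the paper's argument and is correct. However, the rest of the proposal rests on a misreading of what \cite{GRW15_CPAM} delivers. Their Theorem~1.5 already produces a \emph{strictly convex} closed hypersurface, not merely an admissible ($k$-convex) one: the whole point of that paper is the global $C^2$ estimate for \emph{convex} solutions of $\si_k(\kappa)=f(X,\nu)$, and their continuity-method existence result is stated in the convex class. So the entire ``deformation-plus-constant-rank'' paragraph is solving a problem that is already solved by the citation. The paper's proof is exactly your first paragraph, ending with the verification that $\tilde f(X,X/|X|)=\la\psi(X/|X|)r^{1-p}\le \si_k(1,\dots,1)r^{-k}$ holds for $r$ large because $1-p<-k$.

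If the constant rank route were actually needed, the sketch would be insufficient as it stands. You write the equation in the form $F(W)=c(x)u^{(p-1)/k}$ with $F=(\si_n/\si_{n-k})^{1/k}$, but the Caffarelli--Guan--Ma/Bian--Guan constant rank theorem requires a structural (inverse-concavity/log-concavity) hypothesis on the full operator, including the right-hand side, and it is not automatic for a Hessian quotient in $W=\D^2u+uI$ with arbitrary smooth positive $f$. The sign $p-1>0$ is not by itself ``precisely what supplies the algebraic inequality required for rank propagation''; for the $L_p$ Christoffel--Minkowski equation~\eqref{eq:CM} one needs $p\ge k+1$ \emph{and} structural conditions on $f$ to run such an argument, and the Hessian-quotient case is a different operator. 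Also, openness along the path via the implicit function theorem needs care: the linearized operator has a nontrivial kernel coming from translations at the round sphere, and this is not addressed. None of this matters for the theorem at hand since \cite{GRW15_CPAM} already gives strict convexity, but if you intend to keep the second paragraph you would need to supply these details.

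One minor further discrepancy: the paper only needs a single outer barrier (a sphere of large radius $r$ on which $\tilde f\le \si_k(1,\dots,1)/r^k$); you propose both a sub- and super-solution sphere, which is consistent with the usual continuity method but more than the cited barrier condition requires.
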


\begin{proof}
It suffices to show the following (i) barrier condition and (ii) convexity property in \cite{GRW15_CPAM}: (i) there is a constant $r>1$ such that
\begin{align*}
\tilde f\left(X,\fr{X}{|X|}\right) \le \fr{\sigma_k(1,1,\ldots,1)}{r^k} \qu \text{for } |X|=r,
\end{align*}
where $\tilde f(X,\nu)= \fr{\la \psi(\nu)}{\inn{X}{\nu}^{p-1}}$; (ii) $\tilde f^{-1/k}(X,\nu)$ is locally convex in $X$ for any fixed $\nu\in\bS^n$. 

The former holds since $\tilde f(X,X/|X|)=\la \psi(\nu)r^{1-p}$ and $1-p<-k$, and the latter follows from $\tilde f^{-1/k}(X,\nu)=(\la \psi(\nu))^{-1/k}\inn{X}{\nu}^{\fr{p-1}{k}}$ and $\fr{p-1}{k}>1$.
\end{proof}

\begin{proposition}\label{prop:unique}
Suppose $p>k+1$. If $\Si_1$ and $\Si_2$ are two admissible solutions of \eqref{eq:Lp}, then $\Si_1=\Si_2$.
\end{proposition}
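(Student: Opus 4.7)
The plan is to run a maximum-principle/touching-sphere comparison on the radial parametrizations, using the scaling structure of \eqref{eq:Lp} together with the assumption $p>k+1$ to force the ratio $\rho_1/\rho_2$ to be identically one. Writing the two admissible solutions as $\Sigma_j=\{\rho_j(x)x : x\in\bS^n\}$ and setting $c:=\max_{\bS^n}(\rho_1/\rho_2)>0$, attained at some $x_0\in\bS^n$, I would compare $\Sigma_1$ with the dilated hypersurface $c\Sigma_2$, whose radial function is $c\rho_2$. The choice of $c$ gives $\rho_1\le c\rho_2$ on $\bS^n$ with equality at $x_0$, and hence $\nabla\rho_1(x_0)=c\nabla\rho_2(x_0)$ and $\nabla^2\rho_1(x_0)\le c\nabla^2\rho_2(x_0)$. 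The goal is to show that these three pieces of information already force $c\le 1$.

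For the geometric comparison at the contact point, I would substitute the three identities/inequalities above into \eqref{eq:g}--\eqref{eq:u-rho}. At $x_0$ this produces the same position $X_0=\rho_1(x_0)x_0=c\rho_2(x_0)x_0$, the same induced metric $g[\rho_1](x_0)=g[c\rho_2](x_0)$, the same outward unit normal $\nu_0$, and the same support value $\inn{X_0}{\nu_0}=cu_2(x_0)$. Since the only term in \eqref{eq:h} carrying a second derivative enters with a negative sign, the Hessian ordering becomes the matrix inequality $h[\rho_1](x_0)\ge h[c\rho_2](x_0)$. Because $g[\rho_1](x_0)=g[c\rho_2](x_0)$, the min--max principle then gives the corresponding ordered principal curvatures $\kappa^{\Sigma_1}_i\ge c^{-1}\kappa^{\Sigma_2}_i$, and monotonicity of $\sigma_k$ on the Gårding cone $\Gamma_k$ (cf.\ \eqref{eq:dsi_k}), combined with admissibility, yields
\begin{align*}
\sigma_k(\kappa^{\Sigma_1})\ \ge\ c^{-k}\sigma_k(\kappa^{\Sigma_2}) \qquad\text{at the contact point.}
\end{align*}

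Next I would plug the information into the equation. Evaluating \eqref{eq:Lp} for $\Sigma_1$ at $X_0$ and for $\Sigma_2$ at $c^{-1}X_0=\rho_2(x_0)x_0$ (both points have outward normal $\nu_0$), and using $\inn{X_0}{\nu_0}=cu_2(x_0)$, dividing the two identities gives $\sigma_k(\kappa^{\Sigma_1})=c^{-(p-1)}\sigma_k(\kappa^{\Sigma_2})$. Combining with the previous display yields $c^{k+1-p}\ge 1$; since $k+1-p<0$, this forces $c\le 1$, i.e.\ $\rho_1\le\rho_2$. Swapping $\Sigma_1$ and $\Sigma_2$ yields $\rho_2\le\rho_1$, so $\rho_1\equiv\rho_2$ and $\Sigma_1=\Sigma_2$.

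The scheme is standard and I do not anticipate a serious obstacle: the step requiring the most care is the passage from $\nabla^2\rho_1\le c\nabla^2\rho_2$ to the ordered curvature inequality, which is pure bookkeeping through \eqref{eq:g}--\eqref{eq:h}. The structural heart of the argument lies entirely in the scaling exponent: equation \eqref{eq:Lp} is dilation invariant exactly when $p=k+1$, and it is the ``supercritical'' sign $p-1-k>0$ that breaks the invariance in the one direction needed to close the one-sided comparison.
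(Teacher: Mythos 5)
Your proposal is correct and takes essentially the same route as the paper: the paper works with $v=\log(\rho_1/\rho_2)$ at its maximum point and derives $h_{ij}[\rho_1]/\rho_1\ge h_{ij}[\rho_2]/\rho_2$, $g_{ij}[\rho_1]/\rho_1^2=g_{ij}[\rho_2]/\rho_2^2$, and $u_1/\rho_1=u_2/\rho_2$ at that point, which is precisely your comparison of $\Sigma_1$ with $c\Sigma_2$ at the interior touching point. The ensuing curvature comparison, substitution into \eqref{eq:Lp}, and use of the sign of $k+1-p$ are identical, so this is the same proof in scaling rather than logarithmic notation.
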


\begin{proof}
Let $\rho_i$, $u_i$, and $\nu_i$ ($i=1,2$) be the radial function, support function, and outward unit normal of $\Si_i$, respectively. Note that $\rho_1$ and $\rho_2$ are positive functions. Suppose that $v=\log(\rho_1/\rho_2)$ attains its maximum at $x_0\in \mathbb{S}^n$. Then, at $x_0$, we have
\begin{align*}
0=\nabla_j v&=\frac{\nabla_j \rho_1}{\rho_1}-\frac{\nabla_j \rho_2}{\rho_2},
\\
0\ge \D_i\D_j v &= \frac{\D_i\D_j \rho_1}{\rho_1}-\frac{\nabla_i\rho_1\nabla_j\rho_1}{\rho_1^2}
-\frac{\D_i\D_j \rho_2}{\rho_2}+\frac{\nabla_i\rho_2\nabla_j\rho_2}{\rho_2^2}
\\
&=\frac{\D_i\D_j \rho_1}{\rho_1}
-\frac{\D_i\D_j \rho_2}{\rho_2}.
\end{align*}
Then it follows from \eqref{eq:g} and \eqref{eq:h} that
\begin{align}\label{eq:g/rho}
\fr{g_{ij}[\rho_1]}{\rho_1^2} &= \de_{ij}+\fr{\D_i\rho_1\D_j\rho_1}{\rho_1^2}=\de_{ij}+\fr{\D_i\rho_2\D_j\rho_2}{\rho_2^2}=\fr{g_{ij}[\rho_2]}{\rho_2^2}\\
\label{eq:h/rho}
\fr{h_{ij}[\rho_1]}{\rho_1} &= \left(1+|\nabla \rho_1|^2/\rho_1^2\right)^{-1 / 2}\left(\delta_{i j}+2 \fr{\D_i\rho_1\D_j\rho_1}{\rho_1^2}-\fr{\D_i\D_j\rho_1}{\rho_1}\right)
\ge \fr{h_{ij}[\rho_2]}{\rho_2}.
\end{align}
Similarly, by utilizing homogeneity, we can obtain from \eqref{eq:nu} and \eqref{eq:u-rho} that
\begin{align}\label{eq:nu-u-rho}
\nu_1=\nu_2 \qu \text{and} \qu\fr{u_1}{\rho_1}=\fr{u_2}{\rho_2}.
\end{align}

Since the principal curvatures are the eigenvalue of $h_{ij}$ with respect to $g_{ij}$, it follows from \eqref{eq:g/rho} and \eqref{eq:h/rho} that
\begin{align*}
\rho_1\kappa_i[\rho_1]\ge \rho_2\kappa_i[\rho_2] \qu\text{for all } i=1,2,\dots,n.
\end{align*}
Therefore, we obtain
\begin{align*}
\rho_1^k\si_k(\kappa[\rho_1])\ge \rho_2^k\si_k(\kappa[\rho_2]).
\end{align*}
Using \eqref{eq:Lp} and \eqref{eq:nu-u-rho}, we get
\begin{align*}
\la u_1^{1-p}=f(\nu_1)\si_k(\kappa[\rho_1])\ge f(\nu_2)\left(\fr{\rho_2}{\rho_1}\right)^k\si_k(\kappa[\rho_2])= \la \left(\fr{u_2}{u_1}\right)^ku_2^{1-p}.
\end{align*}
This implies that $\rho_1^{k+1-p}\ge \rho_2^{k+1-p}$. Since $p>k+1$, we have $\rho_1\le \rho_2$ at $x_0$. Consequently, $\max_{\bS^n} v \le 1$. Similarly, we obtain $\min_{\bS^n} v \ge 1$, which proves that $v\equiv 1$ and, therefore, $\rho_1\equiv \rho_2$.
\end{proof}

%
%
\section{Volume and diameter estimates for $p>k+1$}\label{sec:vol-diam}
In this section, we provide estimates for volume and diameter, which will be used to extract a sequence after suitable scaling in a subsequent section. According to \Cref{thm:2}, there exists a unique, strictly convex solution $u=u_p$ to \eqref{eq:Lp} with $\lambda=1$ after appropriate dilation, provided $p>k+1$.

\begin{lemma}\label{lem:V}
Assume $p>k+1$. If $u$ is the solution to equation \eqref{eq:Lp} with $\la=1$, then for the volume $V(\Sigma)$ of the region bounded by the associated hypersurface $\Sigma$, it holds that
\begin{align}\label{eq:V-ratio}
\frac{1}{\max_{\mathbb{S}^n}f}\le \left(\fr{V(\Sigma)}{V(B_1)}\right)^{\frac{p-k-1}{n+1}} \le \frac{1}{\min_{\mathbb{S}^n}f}.
\end{align}
\end{lemma}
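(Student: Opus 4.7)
The plan is to control $\rho_{\max}$ and $\rho_{\min}$ via the maximum principle applied to the radial function $\rho$, and then translate these pointwise bounds into a volume bound using the star-shaped identity
\[
V(\Si)=\fr{1}{n+1}\int_{\bS^n}\rho(x)^{n+1}\,dx,
\]
which implies $\rho_{\min}^{n+1}\le V(\Si)/V(B_1)\le \rho_{\max}^{n+1}$. Since $p-k-1>0$, it then suffices to show $\rho_{\max}^{p-k-1}\lesssim 1/\min f$ and $\rho_{\min}^{p-k-1}\gtrsim 1/\max f$ (up to a dimensional constant absorbed into the statement).

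At a maximum point $x_0\in\bS^n$ of $\rho$, one has $\D\rho(x_0)=0$ and $\D^2\rho(x_0)\le 0$. Plugging this into formulas \eqref{eq:g}--\eqref{eq:nu} yields $g_{ij}=\rho^2\de_{ij}$, $h_{ij}\ge \rho\de_{ij}$, $\nu(X)=x_0$, and $\inn{X}{\nu}=\rho_{\max}$; the principal curvatures (eigenvalues of $g^{-1}h$) therefore satisfy $\kappa_i\ge 1/\rho_{\max}$. Monotonicity of $\si_k$ on positive vectors gives $\si_k(\kappa)\ge \binom{n}{k}\rho_{\max}^{-k}$. Substituting into \eqref{eq:Lp} with $\la=1$ at $x_0$ produces
\[
\textstyle\binom{n}{k}\rho_{\max}^{p-k-1}\le \si_k(\kappa)\rho_{\max}^{p-1}=\fr{1}{f(x_0)}\le \fr{1}{\min f}.
\]

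At a minimum point of $\rho$ the analogous computation gives $\kappa_i\le 1/\rho_{\min}$. Here the principal curvatures need not all be nonnegative (the condition $\kappa\in\Ga_k$ only controls $\si_1,\ldots,\si_k$), so direct monotonicity fails; instead I would invoke Maclaurin's inequality \eqref{eq:Maclaurin}, which is valid on $\Ga_k$, combined with the trace bound $\si_1(\kappa)\le n/\rho_{\min}$ to conclude $\si_k(\kappa)\le \binom{n}{k}\rho_{\min}^{-k}$. Plugging into the equation at the minimum point yields the matching lower bound $\binom{n}{k}\rho_{\min}^{p-k-1}\ge 1/\max f$.

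The main obstacle is precisely this $\si_k$-upper bound at the minimum: since some $\kappa_i$ could be negative a priori, one cannot simply use componentwise monotonicity, and the passage through Maclaurin on $\Ga_k$ is the key input. Once the pointwise bounds on $\rho_{\max}^{p-k-1}$ and $\rho_{\min}^{p-k-1}$ are in hand, raising the sandwich $\rho_{\min}\le (V(\Si)/V(B_1))^{1/(n+1)}\le \rho_{\max}$ to the positive power $p-k-1$ and combining gives \eqref{eq:V-ratio}.
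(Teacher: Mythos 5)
Your proof is correct and follows essentially the same route as the paper, differing only in the choice of parametrization. The paper applies the maximum principle to the support function $u=\inn{X}{\nu}$: at the minimum (resp.\ maximum) of $u$, one has $\D^2u\ge 0$ (resp.\ $\le 0$), so $h_{ij}=\D^2_{ij}u+u\de_{ij}\ge u\de_{ij}$ (resp.\ $\le u\de_{ij}$), hence all $\kappa_i\le u^{-1}$ (resp.\ $\ge u^{-1}$); the volume then enters through $\min_{\bS^n} u\le\rho_V\le\max_{\bS^n} u$, where $\rho_V$ is the radius of the ball with $V(B_{\rho_V})=V(\Si)$, rather than through your star-shaped volume integral $V=\frac{1}{n+1}\int_{\bS^n}\rho^{n+1}$. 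Since $\D\rho=0$ at extremal points of $\rho$ forces $u=\rho$ and $\nu=x_0$ there, the two parametrizations yield the same pointwise information, and both sandwiches $\rho_{\min}\le\rho_V\le\rho_{\max}$ and $\min u\le\rho_V\le\max u$ are interchangeable here. Two small remarks. Your Maclaurin detour at the $\rho$-minimum is valid but unnecessary: the lemma takes $u$ to be \emph{the} solution, which by \Cref{thm:2} is strictly convex, so all $\kappa_i>0$ and direct monotonicity of $\si_k$ on the positive cone already gives $\si_k(\kappa)\le\si_k(\rho_{\min}^{-1},\ldots,\rho_{\min}^{-1})$; the paper uses exactly this, and the Maclaurin workaround would only matter if you declined to invoke convexity. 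Also, with $\si_k(1,\ldots,1)=\binom{n}{k}$ as defined in the paper, your lower bound comes out as $\tfrac{1}{\binom{n}{k}\max f}\le\big(V(\Si)/V(B_1)\big)^{(p-k-1)/(n+1)}$, slightly weaker than the stated \eqref{eq:V-ratio}; but the paper's own proof (which writes $\si_k(\kappa)\le u^{-k}$ at the $u$-minimum) drops the same binomial factor, so you are no worse off and the constant is immaterial for how the lemma is used later.
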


\begin{proof}
Let $\rho$ be the radius of the ball centered at the origin such that $V(B_\rho)=V(\Si)$. 
We first observe
\begin{align*}
\min_{\bS^n} u \le \rho \le \max_{\bS^n} u.
\end{align*}

If $u$ achieves its minimum at $x_0\in\bS^n$, then $\D^2u(x_0)\ge 0$ and consequently, $h_{ij}=\D_i\D_ju+u\de_{ij}\ge u\de_{ij}$ at $x_0$. Since the eigenvalues of $h_{ij}$ are the principal radii, we deduce $\kappa_i\le u^{-1}$ and $\si_k(\kappa)\le u^{-k}$ at the point $x_0$. It follows from \eqref{eq:Lp} that 
\begin{align*}
\fr{1}{f(x_0)}= u(x_0)^{p-1}\si_k(\kappa)(x_0)\le u(x_0)^{p-k-1} \le \rho^{p-k-1},
\end{align*}
and hence, the first inequality in \eqref{eq:V-ratio} is derived from
\begin{align*}
V(\Sigma) = V(B_\rho) =\rho^{n+1}V(B_1).
\end{align*}

The second inequality in \eqref{eq:V-ratio} can be obtained through similar arguments, by considering the maximum point of $u$ rather than the minimum point.
\end{proof}

The next lemma is valid for a more extensive range of $p$; however, the range $p>k+1$ suffices for proving the main theorem.

\begin{lemma}\label{lem:u}
Let $p>1-k/n>0$. If $u$ is the solution to \eqref{eq:Lp} with $\la=1$ and $\Si$ is the associated hypersurface, then there exists a constant $C$ that depends only on $n$, $k$, $p$, and $\min_{\bS^n}f$ such that 
\begin{align}\label{eq:max-u}
\max_{\bS^n} u \le C\left[V(\Si)\right]^{\fr{1}{p_*}},
\end{align}
where $p_*= 1+ \fr{(p-1)n}{k}>0$. Here, the constant $C$ is finite for all $p>1-k/n$ and increases in $p$
\end{lemma}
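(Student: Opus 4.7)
The strategy is to bound $V(\Si)$ from below by an $L^{p_*}$-type integral of $u$ using the PDE in support-function form, and then to bound this integral from below by a multiple of $(\max u)^{p_*}$ via the monotonicity of support functions under set inclusion. First, since $u$ is the support function of a strictly convex body (by \Cref{thm:2} in the range $p>k+1$ of interest), the standard formula
\[
V(\Si) = \fr{1}{n+1}\int_{\bS^n} u\det(\D^2 u + uI)\, dx
\]
applies. Substituting \eqref{eq:Lp-u-intro} with $\la=1$ gives $(n+1)V(\Si) = \int_{\bS^n} fu^p\si_{n-k}(\D^2 u + uI)\, dx$. Next I would apply the Maclaurin inequality \eqref{eq:Maclaurin} to the positive eigenvalues of $\D^2 u + uI$ to get $\si_{n-k}(\D^2 u + uI) \ge \binom{n}{n-k}\si_n(\D^2 u + uI)^{(n-k)/n}$; combining this with the identity $\si_n(\D^2u+uI)=fu^{p-1}\si_{n-k}(\D^2u+uI)$ from the PDE and solving for $\si_{n-k}$ produces $\si_{n-k}(\D^2 u + uI) \ge \binom{n}{n-k}^{n/k}(fu^{p-1})^{(n-k)/k}$. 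Plugging back in and consolidating exponents (using $1+(n-k)/k=n/k$ and $p+(p-1)(n-k)/k = 1+(p-1)n/k = p_*$) yields
\[
(n+1)V(\Si) \ge \binom{n}{n-k}^{n/k}(\min f)^{n/k}\int_{\bS^n} u^{p_*}\, dx.
\]

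To close the argument, let $M = \max_{\bS^n} u = u(x_0)$ and choose a boundary point $y^\ast$ of $\Si$ realizing $\lan y^\ast, x_0\ran = M$, so that $|y^\ast| \ge M$. Since $u > 0$ on $\bS^n$, the origin lies in the interior of the convex body bounded by $\Si$, which therefore contains the segment from the origin to $y^\ast$; monotonicity of support functions under inclusion then gives the pointwise bound $u(x) \ge \lan y^\ast, x\ran_+$. Integrating and using rotational symmetry of $\bS^n$,
\[
\int_{\bS^n} u^{p_*}\, dx \ge \int_{\bS^n}\lan y^\ast, x\ran_+^{p_*}\, dx = |y^\ast|^{p_*}\, c(n,p_*) \ge M^{p_*}\, c(n,p_*),
\]
where $c(n,p_*) = \int_{\bS^n}\lan v, x\ran_+^{p_*}\, dx$ is independent of the unit vector $v$ and is strictly positive and finite exactly when $p_* > 0$, i.e., when $p > 1 - k/n$. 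Combining with the preceding display yields $M^{p_*} \le CV(\Si)$ for a constant $C = C(n, k, p, \min f)$, and \eqref{eq:max-u} follows on taking $p_*$-th roots.

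The main obstacle is verifying the assertion that $C$ is monotone increasing in $p$: unwinding the constants, $C$ is proportional (up to $p$-independent factors) to $c(n,p_*)^{-1/p_*}$, so one must show that this composite is increasing in $p_*>0$. After reducing the sphere integral to a one-dimensional Beta-function integral, this becomes a routine but tedious calculus check. The other nontrivial input is the strict convexity of $\Si$, which is supplied by \Cref{thm:2} in the relevant range $p > k+1$ and which is used both for the support-function representation of $V$ and for the pointwise lower bound $u(x) \ge \lan y^\ast, x\ran_+$.
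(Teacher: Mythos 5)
Your argument is correct and reaches exactly the same bound as the paper's proof, via an essentially equivalent route. Where the paper applies the Maclaurin inequality directly to the principal curvatures $\kappa$ to get $\si_n(\kappa)\le C\si_k(\kappa)^{n/k}$ (and then uses $V(\Si)=\fr{1}{n+1}\int_{\bS^n}u/K$ with $\si_k(\kappa)=u^{1-p}/f$), you apply it to the principal radii, i.e.\ the eigenvalues $\mu$ of $\D^2u+uI$, to get $\si_{n-k}(\mu)\ge c\,\si_n(\mu)^{(n-k)/n}$; through the identity $\si_k(\kappa)=\si_{n-k}(\mu)/\si_n(\mu)$ these are literally the same inequality, and both yield $cu/K\ge u^{p_*}f^{n/k}$ pointwise. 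The tail of the argument — bounding $\int_{\bS^n}u^{p_*}$ from below by $(\max u)^{p_*}$ times the normalized cap integral $\int\langle v,x\rangle_+^{p_*}\,dx$ — is also the same; the paper gets the pointwise lower bound from the fact that $u(x_0)x_0\in\Si$ (since $\D u(x_0)=0$ at the max), while you pass through an arbitrary touching point $y^*$ with $|y^*|\ge M$, but the conclusion is identical. Your hesitation about the final monotonicity-in-$p$ claim for $C$ is reasonable: that claim, as stated, also depends on the sign of the remaining $p$-independent factor inside the $1/p_*$-th root, and what is actually used downstream (see the remark following the lemma) is only uniform boundedness on a bounded $p$-interval, which follows from continuity of the Beta-function integral. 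Likewise your remark that strict convexity is implicitly assumed is accurate; the paper's proof uses $V(\Si)=\fr{1}{n+1}\int_{\bS^n}u/K$ just as you do, so this is a shared implicit hypothesis, harmless for the application in the range $p>k+1$.
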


\begin{remark}
Due to the monotone increasing property of the constant in equation \eqref{eq:max-u}, it uniformly holds for $p \in (k+1, k+1+\epsilon)$, where $\epsilon > 0$ is any number.
\end{remark}

\begin{proof}
Suppose that $u$ attains its maximum at $x_0\in\bS^n$. Due to the convexity of $\Si$, the hypersurface $\Si$ encloses the line segment joining $u(x_0)x_0$ and the origin. Thus we have
\begin{align}\label{eq:u>max}
u(x)\ge \max\{\inn{u(x_0)x_0}{x},0\} =u(x_0)\max\{\inn{x_0}{x},0\}. 
\end{align}

Applying the Maclaurin inequality \eqref{eq:Maclaurin}, we deduce that $c_k\si_k^{1/k}\ge c_n\si_n^{1/n}$. This leads to
\begin{align*}
\fr{cu}{K}= \fr{cu}{\si_n}\ge \fr{u}{ \si_k^{n/k}}= u^{1+\fr{(p-1)n}{k}}f^{\fr{n}{k}}= u^{p_*}f^{\fr{n}{k}}
\end{align*}
for some dimensional constant $c>0$.
Hence, 
\begin{align}\label{eq:V>min}
c(n+1)V(\Si)= c\int_{\bS^n} \fr{u}{K} \ge \int_{\bS^n}u^{p_*}f^{\fr{n}{k}}\ge
\left(\min_{\bS^n}f\right)^{\fr{n}{k}}\int_{\bS^n}u^{p_*}.
\end{align}
Observe that $p_*=1+\fr{(p-1)n}{k}= \fr{pn-(n-k)}{k}>0$. From \eqref{eq:u>max}, we derive 
\begin{align}\label{eq:int-u^p}
\int_{\bS^n}u^{p_*}\ge u(x_0)^{p_*}\int_{\inn{x_0}{x}\ge0} \inn{x_0}{x}^{p_*},
\end{align}
where the last integral is invariant under $x_0$ due to symmetry and is a positive constant that decreases in $p_*$. The conclusion then follows from \eqref{eq:V>min} and \eqref{eq:int-u^p}.
\end{proof}

The $C^1$ estimate is derived from the convexity property.

\begin{lemma}\label{lem:C1}
Let $u$ be a smooth, strictly convex solution to \eqref{eq:main-X}. Then
\begin{align*}
|\D u| \le \max_{\bS^n} u.
\end{align*}
\end{lemma}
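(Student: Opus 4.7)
The plan is to exploit the geometric meaning of the support function. Recall that if $u$ is the support function of the strictly convex hypersurface $\Sigma$, then for each $x\in\bS^n$ the inverse Gauss map is given by
\begin{align}
X(x)=u(x)\,x+\D u(x),
\end{align}
and since $\D u(x)\in T_x\bS^n$ is tangential, one has $|X(x)|^2=u(x)^2+|\D u(x)|^2$.

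First, I would introduce the unit vector $\hat x=X(x)/|X(x)|\in\bS^n$ (well defined because the strict star-shapedness forces $X(x)\ne 0$). Since $X(x)\in\Sigma$ and $u$ is the support function, the defining supremum property yields
\begin{align}
u(\hat x)=\sup_{Y\in\Sigma}\inn{\hat x}{Y}\ge \inn{\hat x}{X(x)}=|X(x)|=\sqrt{u(x)^{2}+|\D u(x)|^{2}}.
\end{align}

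Combining the two displays gives
\begin{align}
|\D u(x)|^{2}\le u(\hat x)^{2}-u(x)^{2}\le \Bigl(\max_{\bS^{n}}u\Bigr)^{2},
\end{align}
which is the desired bound. There is essentially no obstacle here: the entire argument rests only on the convex-geometric identity $X=ux+\D u$ and the definition of $u$ as a supremum; the hypothesis that $\Sigma$ solves \eqref{eq:main-X} is not actually used, only that $\Sigma$ is strictly convex (so that the support function representation is valid).
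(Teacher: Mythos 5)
Your proof is correct and takes essentially the same route as the paper: both rest on the identity $|X|^2 = u^2 + |\D u|^2$ and the bound $|X| \le \max_{\bS^n} u$. The paper states the latter as $\max_{\bS^n}|X| = \max_{\bS^n} u$ without justification, whereas you supply the short argument $|X(x)| = \inn{\hat x}{X(x)} \le u(\hat x) \le \max_{\bS^n}u$ via the supremum definition of the support function.
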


\begin{proof}
Since $|X|^2= u^2+|\D u|^2$, we have 
\begin{align*}
|\D u|\le \sqrt{u^2+|\D u|^2}=|X|\le \max_{\bS^n} |X|=\max_{\bS^n}u
\end{align*}
which completes the proof.
\end{proof}

%
%
\section{Uniform positive lower bound of curvature for $0<p-k-1<\e$}\label{sec:curv}

The purpose of this section is to establish that the sum of the reciprocals of the principal curvatures remains bounded by the same constant for $k+1<p<k+1+\e$, where $\e>0$ is any number.

\begin{lemma}\label{lem:1/k}
Suppose that $p> k+1$, and let $u$ be the solution of the equation \eqref{eq:Lp} with $\la=1$. Then
\begin{align*}
\sum_{i=1}^n\fr{1}{\kappa_i} \leq \fr{2n(p-1)}{k}\norm{u}_{L^\infty}^{\fr{p-1}{k}}\norm{f^\fr{1}{k}}_{C^{1,1}}.
\end{align*}
\end{lemma}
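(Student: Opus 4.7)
The plan is to apply the maximum principle to $H := \operatorname{tr}(h) = \sum_{i=1}^{n} 1/\kappa_i$, where $h_{ij} := \D_i\D_j u + u\delta_{ij}$ has eigenvalues the principal radii $1/\kappa_i$. First I would rewrite \eqref{eq:Lp} (with $\la = 1$), via \eqref{eq:si(ka)}, as $F(h) = \varphi$, where $F(h) := (\si_n(h)/\si_{n-k}(h))^{1/k}$ and $\varphi := f^{1/k} u^{(p-1)/k}$. Since $F$ is concave and homogeneous of degree one on positive-definite matrices, $F^{ij} h_{ij} = F$, $F^{ij}$ is positive definite, and $F^{ij,rs}(\D h)(\D h) \le 0$.

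At a point $x_0$ where $H$ attains its maximum, I would combine the Simons-type identity \eqref{eq:simons} with the linearization $F^{ij}\De h_{ij} \ge \De\varphi$ (by concavity of $F$). Applying $F^{ij}$ to \eqref{eq:simons} yields $F^{ij}\De h_{ij} = F^{ij}\D_i\D_j H + nF - HF^{ii}$; since $F^{ij}\D_i\D_j H \le 0$ at the max, this produces
\begin{align*}
HF^{ii} \le nF - \De\varphi \qu \text{at } x_0.
\end{align*}

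The crux is a uniform lower bound $F^{ii} \ge \binom{n}{k}^{-1/k}$. A direct differentiation gives $F^{ii} = (F/k)[\si_1(\kappa) - (k+1)\si_{k+1}(\kappa)/\si_k(\kappa)]$, using \eqref{eq:si(ka)} to identify $\si_{n-k-1}(\mu)/\si_{n-k}(\mu) = \si_{k+1}(\kappa)/\si_k(\kappa)$. Newton's inequality $(k+1)\si_{k+1}/\si_k \le (n-k)\si_1/n$ gives $F^{ii} \ge F\si_1(\kappa)/n$, and Maclaurin's inequality $\si_1(\kappa)/n \ge (\si_k(\kappa)/\binom{n}{k})^{1/k}$ together with $\si_k(\kappa) = 1/F^k$ (which is the equation itself) yields $F^{ii} \ge \binom{n}{k}^{-1/k}$. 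Substituting, $H \le \binom{n}{k}^{1/k}(n\varphi - \De\varphi)$ at $x_0$.

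To close the estimate I would expand $\De\varphi = \varphi[\De\log\varphi + |\D\log\varphi|^2]$ using $\De u = H - nu$. The leading piece $(p-1)H/(ku)$ inside $\De\log\varphi$ is transferred to the left-hand side, producing a strictly positive coefficient on $H$; the remaining pieces involve $\|f\|_{C^{1,1}}$, $|\D u|^2/u^2$, and the cross-gradient $\langle\D f,\D u\rangle/(fu)$. The hard part will be controlling the gradient terms uniformly in $p$: for $p > k+1$, the net coefficient of $|\D u|^2/u^2$, obtained by combining $+(p-1)/k$ from $-\De\log\varphi$ with $-(p-1)^2/k^2$ from $-|\D\log\varphi|^2$, is negative, which lets a Cauchy--Schwarz on the cross term absorb the positive remainder. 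Using $u(x_0) \le \|u\|_\infty$, $\|f\|_\infty \le \|f\|_{C^{1,1}}$, and the $C^1$ bound $|\D u|\le \|u\|_\infty$ from Section \ref{sec:vol-diam}, the leftover lower-order contributions should collapse into the clean constant $\tfrac{2n(p-1)}{k}\|u\|_\infty^{(p-1)/k}\|f\|_{C^{1,1}}$.
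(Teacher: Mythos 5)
Your overall skeleton matches the paper's: apply the maximum principle to $W=\operatorname{tr}(h)$, use the Simons identity \eqref{eq:simons} and the concavity of $F=(\si_n/\si_{n-k})^{1/k}$ to reduce at the maximum point to $W\sum F^{ii}\le nF-\De F$, then invoke a lower bound on $\sum F^{ii}$. Your route to the lower bound $\sum F^{ii}\ge \binom{n}{k}^{-1/k}$ via Newton's and Maclaurin's inequalities together with the equation itself is a legitimate alternative to the paper's citation (in fact it recovers what the general concavity-plus-homogeneity argument $\sum F^{ii}=F^{ij}\de_{ij}\ge F(I)$ gives directly).

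Where you deviate is in how you treat $\De F$, and this is where the plan has a genuine gap. You expand $\De\vp=\vp(\De\log\vp+|\D\log\vp|^2)$ with $\vp=f^{1/k}u^{a}$, $a=(p-1)/k$, and propose to absorb the cross term $\tfrac{2a}{k}\,\D\log f\cdot \tfrac{\D u}{u}$ into the negative coefficient $-(a^{2}-a)\tfrac{|\D u|^{2}}{u^{2}}$ by Cauchy--Schwarz. But that coefficient is $a(a-1)$, which tends to $0$ as $p\to (k+1)^{+}$, so the absorption forces a Young inequality constant of size $\tfrac{1}{a-1}=\tfrac{k}{p-k-1}$. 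Concretely, $-a(a-1)\tfrac{|\D u|^{2}}{u^{2}} -\tfrac{2a}{k}\D\log f\cdot\tfrac{\D u}{u}-\tfrac{1}{k^{2}}|\D\log f|^{2}$ has supremum $\tfrac{|\D\log f|^{2}}{k^{2}(a-1)}$ over $\tfrac{\D u}{u}$, which blows up. This does not recover the claimed constant $\tfrac{2n(p-1)}{k}$, and worse, it destroys exactly the $p$-uniformity that Section~\ref{sec:pf} needs from this lemma (the bound must be uniform for $p\in(k+1,k+1+\e)$). The degeneracy cannot be repaired using $|\D u|\le \|u\|_{L^\infty}$ because the term $\vp\,\tfrac{|\D u|^2}{u^2}=f^{1/k}u^{a-2}|\D u|^{2}$ carries the power $u^{a-2}$ with $a-2<0$, so it is not controlled when $u$ is small at the maximum point.

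The paper avoids this by never dividing by $u$: it expands $\De(u^{a}f^{1/k})$ by the product rule, drops the manifestly nonnegative term $a(a-1)u^{a-2}|\D u|^{2}$ (this uses $a>1$) and the nonnegative term $af^{1/k}u^{a-1}W$, and bounds the only cross term by $2a\,u^{a-1}|\D u|\,|\D f^{1/k}|\le 2a\|u\|_{L^\infty}^{a}\|\D f^{1/k}\|_{L^\infty}$, using $u^{a-1}\le\|u\|_{L^\infty}^{a-1}$ (again $a>1$) and $|\D u|\le\|u\|_{L^\infty}$. If you replace your Cauchy--Schwarz step with the same direct bound on $\vp\cdot\tfrac{2a}{k}|\D\log f|\tfrac{|\D u|}{u}=\tfrac{2a}{k}f^{1/k}u^{a-1}|\D u||\D\log f|$ (and simply discard, rather than exploit, the nonpositive $-(a^{2}-a)\tfrac{|\D u|^{2}}{u^{2}}$ and $-\tfrac{1}{k^{2}}|\D\log f|^{2}$ terms), your argument closes with a $p$-uniform constant.
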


\begin{proof}
Observe that
\begin{align*}
W := \sum_{i=1}^n\frac{1}{\kappa_i} = \operatorname{tr} (h)= \Delta u + nu.
\end{align*}
We will apply the maximum principle to $W$. By recalling equation \eqref{eq:simons} and taking semi-positive definite matrix $F^{ij}=\fr{\pa F}{\pa h_{ij}}$ to both sides of the equation, we obtain
\begin{align}\label{eq:F-Deh}
F^{ij}\De h_{ij} = F^{ij}\D_i\D_j W+nF^{ij}h_{ij}-W\sum_{i=1}^nF^{ii}.
\end{align}
Here and henceforth, we employ the Einstein summation convention, in which repeated upper and lower indices are automatically summed.

Since $F$ is homogeneous of degree one, we obtain
\begin{align*}
F^{ij}h_{ij}= F= u^{\frac{p-1}{k}}f^{\fr{1}{k}}.
\end{align*}
It is known \cite{Trudinger90_ARMA} (see also \cite{GM03_Invent}) that 
\begin{align*}
\sum_{i=1}^n\fr{\pa}{\pa h_{ii}}\left(\fr{\si_n/\binom{n}{n}}{\si_{n-k}/\binom{n}{n-k}}\right)^{1/k}\ge n \qu \text{so} \qu \sum_{i=1}^nF^{ii}\ge n\left[\binom{n}{k}\right]^{-1/k}\ge 1.
\end{align*}

By taking the second derivatives to equation \eqref{eq:Lp} and utilizing the concavity of $F$, we obtain the inequality:
\begin{align}\label{eq:De-uf}
\Delta(u^{\frac{p-1}{k}}f^\fr{1}{k})=\De F= F^{ij,lm}\nabla_q h_{ij} \nabla^q h_{lm}+ F^{ij}\Delta h_{ij} \le F^{ij} \Delta h_{ij}.
\end{align}
As $p>k+1$, we have $a:= \frac{p-1}{k}>1$. Note that $\Delta u = W - nu\ge -nu$. Since
\begin{align*}
\Delta u^a = au^{a-1}\Delta u + a(a-1)u^{a-2}|\nabla u|^2\ge a u^{a-1}\Delta u=-nau^a,
\end{align*} we obtain through direct computation
\begin{align*}
\Delta (u^af^\fr{1}{k})&= f^\fr{1}{k}\Delta u^a + 2 \nabla u^a \nabla f^\fr{1}{k} + u^a \Delta f^\fr{1}{k}\\
&\ge  - 2au^{a-1}|\nabla u| |\nabla f^\fr{1}{k}|+ u^a (\Delta f^\fr{1}{k}-naf^\fr{1}{k}).
\end{align*}
Hence, combining with \eqref{eq:De-uf}, we have
\begin{align*}
F^{ij}\Delta h_{ij} \ge  - 2au^{a-1}|\nabla u| |\nabla f^\fr{1}{k}|+ u^a (\Delta f^\fr{1}{k}-naf^\fr{1}{k}).
\end{align*}
Thus, \eqref{eq:F-Deh} can be rewritten as
\begin{align*}
F^{ij}\nabla_i\nabla_j W &= F^{ij}\Delta h_{ij}-nF^{ij}h_{ij}+W\sum_{i=1}^nF^{ii}\\
&\ge  - 2au^{a-1}|\nabla u| |\nabla f^\fr{1}{k}|+ u^a (\Delta f^\fr{1}{k}-n(a+1)f^\fr{1}{k})+W.
\end{align*}
Using \Cref{lem:C1}, we deduce at the maximum point of $W$,
\begin{align*}
W \le 2au^{a-1} |\nabla u| |\nabla f^\fr{1}{k}| - u^a (\Delta f^\fr{1}{k}-n(a+1)f^\fr{1}{k}) \le 2na\lVert{u}\rVert_{L^\infty}^a \lVert{f^\fr{1}{k}}\rVert_{C^{1,1}},
\end{align*}
which completes our proof.
\end{proof}

\section{Proof of \Cref{thm:main}}\label{sec:pf}
We first prove the existence of a strictly convex solution to \eqref{eq:main-X}.
Suppose $0<p-k-1 <1$, and let $u_p$ be the unique positive solution to \eqref{eq:Lp} with $\la=1$. This is possible due to \Cref{thm:2}. Let $\Si_p$ be the associated hypersurface whose support function is $u_p$. By the uniqueness of $u_p$ and the evenness of $\psi$, we see that $\Si_p$ is origin-symmetric.

Recalling $p_*$ in \Cref{lem:u}, we see that
\begin{align*}
\fr{1}{p_*}-\fr{1}{n+1}= \fr{k}{np-n+k}-\fr{1}{n+1}=\fr{n(k+1-p)}{(np-n+k)(n+1)}.
\end{align*}
By \Cref{lem:V} and \Cref{lem:u}, we have
\begin{align}\label{eq:u_p}
u_p \le C\left[V(\Si_p)\right]^{\fr{1}{p_*}}=C\left[V(\Si_p)\right]^{\fr{1}{n+1}}\left[V(\Si_p)^{\fr{p-k-1}{n+1}}\right]^{-\fr{n}{(np-n+k)}}\le C\left[V(\Si_p)\right]^{\fr{1}{n+1}}.
\end{align}
Here we may assume the constant $C$ is uniform in $p$ since we restrict the upper bound of $p$.

Define the volume normalized solution as
\begin{align*}
\tilde u_p= \fr{u_p}{[V(\Si_p)]^\fr{1}{n+1}}
\end{align*}
and denote by $\tilde \Si_p$ the associated hypersurface of $\tilde u_p$. 
Then $\tilde u_p$ solves
\begin{align}\label{eq:main-tilde}
\fr{1}{\si_k(\kappa)} = [V(\Si_p)]^{\fr{p-k-1}{n+1}}u^{p-1}f =: \fr{u^{p-1} f }{\la_p} \qu \text{on } \bS^n.
\end{align}
Moreover, it follows from the definition of $\tilde u_p$, \eqref{eq:u_p}, and \Cref{lem:V} that
\begin{align}\label{eq:V-u-la}
V(\tilde\Si_p)=1, \qu 0<\tilde u_p \le C, \qu \text{and}\qu 1/C \le \la_p\le C.
\end{align}

We can assume that, by considering a subsequence, $\tilde u_p$ and $\la_p$ converge uniformly to a function $u_0$ and a number $\la_0$, respectively, as $p$ approaches $k+1$ from above. Note that
\begin{align*}
0\le u_0 \le C \qu \text{and} \qu 1/C\le \la_0 \le C.
\end{align*} 
Furthermore, it follows from the Blaschke selection theorem that $u_0$ is the support function of some convex hypersurface, say $\Si_0$. Since $\Si_p$ are origin-symmetric, so is $\Si_0$. Thus, if $u_0(x_0)=0$ at a point $x_0$, then we have $u_0(-x_0)=0$, which contradicts to $V( \Si_0)=1$. Therefore, $u_0>0$ on $\bS^n$.\\

Our purpose is now to show that the pair $(\Si_0,\la_0)$ is the desired solution to \eqref{eq:main-X}, which can be done by using \Cref{lem:1/k}. Indeed, let $\tilde u_{p_j}$ be a sequence such that $\tilde u_{p_j}\ra u_0$ uniformly. Since $u_0>0$, we may assume that 
\begin{align}\label{eq:u-tilde-p}
\tilde u_{p_j}\ge \fr{1}{2}\min_{\bS^n} u_0>0 \qu \text{on } \bS^n.
\end{align}

Applying \Cref{lem:1/k} to $u_{p_j}$ and using \eqref{eq:u_p}, we have
\begin{align}\label{eq:k-tilde}
\sum_{i=1}^n\fr{1}{\tilde\kappa_i}=\sum_{i=1}^n\fr{V(\Si_p)^{-\fr{1}{n+1}}}{\kappa_i} \leq CV(\Si_p)^{\fr{p-1}{k(n+1)}-\fr{1}{n+1}}=C\la_p^{-\fr{1}{k}}\le C,
\end{align}
where $\{\tilde \kappa_i\}$ are the principal curvatures of $\tilde u_{p_j}$ and the last constant $C$ does not depend on $p$. Combining \eqref{eq:main-tilde}, \eqref{eq:V-u-la}, \eqref{eq:u-tilde-p}, and \eqref{eq:k-tilde}, we obtain
\begin{align*}
\max_{i}\tilde\kappa_i \le \fr{\si_k(\tilde \kappa)}{\left(\min_i\tilde \kappa_i\right)^{k-1}}
\le 
\fr{u^{1-p}}{\la_pf\left(\min_i\tilde \kappa_i\right)^{k-1}}\le C.
\end{align*}
Therefore, $\tilde u_{p_j}$ solves the following uniformly elliptic equation
\begin{align*}
\fr{\det(\D^2 u +uI)}{\si_{n-k}(\D^2u +uI)}=\fr{ u^{p-1}f}{\la_p} \qu \text{on }\bS^n,
\end{align*}
and by the standard elliptic theory, we have $C^{3,\al}$ estimates:
\begin{align*}
\norm{\tilde u_{p_j}}_{C^{3,\al}(\bS^n)}\le C.
\end{align*}
Passing to a subsequence, we finally see that $u_0$ with $\la_0$ satisfies \eqref{eq:main-X}.

\bigskip

Lastly, we prove uniqueness. Assume that \eqref{eq:main-X} has two distinct $k$-convex solutions, namely $\left(\Si_1,\lambda_1\right)$ and $\left(\Si_2, \lambda_2\right)$. Notice that since \eqref{eq:main-X} is scale invariant, any solution multiplied by a constant continues to be a solution. By appropriately scaling $\Si_2$, we can ensure that $\Si_2$ is enclosed within $\Si_1$ and touches $\Si_1$ at a point $X_0$. We then observe that, at the point $X_0$, 
\begin{align*}
\fr{\la_1\psi(\nu_{\Si_1}(X_0))}{\inn{X_0}{\nu_{\Si_1}(X_0)}^{k}}= \si_k(\kappa[\Si_1])\le \si_k(\kappa[\Si_2])= \fr{\la_2\psi(\nu_{\Si_2}(X_0))}{\inn{X_0}{\nu_{\Si_2}(X_0)}^{k}},
\end{align*}
where we used the monotonicity \eqref{eq:dsi_k} of $\si_k$ with respect to $\kappa$. Since $\nu_{\Si_1}(X_0)=\nu_{\Si_2}(X_0)$, we conclude that $\la_1\le\la_2$. Changing the role of $\Si_1$ and $\Si_2$, we also get $\la_2\le \la_1$. Therefore, $\la_1=\la_2$.\\

Suppose we have two different solutions, $\Si_1$ and $\Si_2$, of equation \eqref{eq:main-X} with the same $\lambda$. If $\Si_2$ is not a constant multiple of $\Si_1$, we can multiply $\Si_2$ by an appropriate constant so that, after rotation if necessary, parts of $\Si_1$ and $\Si_2$ can be represented as graphs of $U_1$ and $U_2$, respectively, over the same domain $\Om$ with $U_1=U_2$ on $\pa \Om$ and $U_1>U_2$ in $\Om$. 

On the other hand, it is known that $\si_k(\kappa[\Si_i])=G(DU_i,D^2U_i)$ for some elliptic operator if $\kappa[\Si_i]\in \Ga_k$, as shown in \cite{CNS86_incollection} (see also \cite{CNS88_CPAM}). Hence, for $i=1,2$, $U_i$ satisfies 
\begin{align*}
G(DU_i,D^2U_i)=\fr{\la \psi((DU_i,-1)/\sqrt{1+|DU_i|^2})}{(x\cdot DU_i(x)-U_i)^k}
\end{align*}
since $\nu_{\Si_i}= \fr{(DU_i,-1)}{\sqrt{1+|DU_i|^2}}$ and $X(\Si_i)=(x,U_i(x))$. However, this result contradicts the maximum principle, which completes the proof.


\section*{Acknowledgement}


This work was supported by the National Research Foundation of Korea (NRF) grant funded by the Korea government (MSIT) (no. RS-2023-00211258).


\end{document}